\newtheorem{theorem}{Theorem}[section]
\newtheorem{thm}{Theorem}[section]
\newtheorem{lemma}[theorem]{Lemma}
\newtheorem{corollary}[theorem]{Corollary}
\newtheorem{example}{Example}[section]
\def\GF#1{{\mathbb F}_{#1}}
\begin{document}

\title{Packing Costas Arrays}

\author{J.H. Dinitz\\Department of Mathematics and Statistics\\
University of Vermont\\
Burlington, Vermont 05405,
U.S.A. \and P.R.J. \"{O}sterg\r{a}rd\thanks{Research supported in 
part by the Academy of Finland grants 130142 and 132122.}\\Department 
of Communications and Networking\\
Aalto University School of Electrical Engineering\\
P.O. Box 13000\\
00076 Aalto,
Finland \and D.R. Stinson\thanks{Research supported by NSERC grant
203114-06.}\\David R. Cheriton School of Computer Science\\
University of Waterloo\\
Waterloo Ontario, N2L 3G1,
Canada 
}

\date{\today}

\maketitle
\begin{center}
{\large\bf Dedicated to Ralph Stanton}
\end{center}

\begin{abstract}
A Costas latin square of order $n$ is a set of $n$ disjoint
Costas arrays of the same order. Costas latin squares are studied here
from a construction as well as a classification point of view.
A complete classification is carried out up to order 27. 
In this range, we verify the conjecture
that there is no Costas latin square for any odd order $n \geq 3$.
Various other related combinatorial structures are also considered,
including near Costas latin squares (which are certain packings of 
near Costas arrays) and Vatican Costas squares.
\end{abstract}

\section{Introduction}
\label{s1}

A \emph{Costas array} of \emph{order} $n$ (or \emph{side} $n$) 
is an $n \times n$ array of dots and empty cells such that 
\begin{enumerate}
\item there are $n$ dots and $n(n-1)$ empty cells, with
exactly one dot in each row and column, and 
\item
all the segments between pairs of dots differ
in length or in slope.  
\end{enumerate}
Costas arrays were introduced by J.P.\ Costas; see \cite{C,Gol84,GT} for
early results and historical remarks. Extensive surveys of
Costas arrays can be found in \cite{GG,TD}.
%There is a conf. paper from 1975, but Costas apparently studied 
%these already in the 1960s. No need to mention a date(?)

Two Costas arrays of order $n$ are \emph{disjoint} if there is no cell in 
which both arrays have a dot. Let $D(n)$ denote the maximum number of 
mutually disjoint Costas arrays of order $n$. The main topic of this paper
is the study of $D(n)$. Obviously $D(n) \leq n^2/n = n$. We are
particularly interested in cases where this upper bound on $D(n)$ is
attained; in those cases, %the $n \times n$ array is \emph{tiled} by the
%Costas arrays and 
we say that we have a \emph{full set} of mutually
disjoint Costas arrays.

\begin{example}
\label{e1}
We present all the Costas arrays of order $4$. The four arrays
in the first row are mutually disjoint.
%It is interesting to note that these Costas arrays are all 
%in the same orbit under the cyclic or dihedral group.

\begin{center}
$
\begin{array}{cccc}
\begin{array}{|c|c|c|c|} \hline
\bullet&&& \\ \hline
&\bullet&&  \\ \hline
&&&\bullet \\  \hline
&&\bullet&  \\  \hline
\end{array}  & %\hspace{.2in}
\begin{array}{|c|c|c|c|} \hline
&&&\bullet \\ \hline
&&\bullet&  \\ \hline
\bullet&&& \\  \hline
&\bullet&&  \\  \hline
\end{array} & % \hspace{.2in}
\begin{array}{|c|c|c|c|} \hline
&\bullet&& \\ \hline
\bullet&&&  \\ \hline
&&\bullet& \\  \hline
&&&\bullet  \\  \hline
\end{array} &  % \hspace{.2in}
\begin{array}{|c|c|c|c|} \hline
&&\bullet& \\ \hline
&&&\bullet  \\ \hline
&\bullet&& \\  \hline
\bullet&&&  \\  \hline
\end{array}\\[.4in]
\begin{array}{|c|c|c|c|} \hline
\bullet&&& \\ \hline
&&\bullet&  \\ \hline
&&&\bullet \\  \hline
&\bullet&&  \\  \hline
\end{array}  & %\hspace{.2in}
\begin{array}{|c|c|c|c|} \hline
&&&\bullet \\ \hline
\bullet&&&  \\ \hline
&&\bullet& \\  \hline
&\bullet&&  \\  \hline
\end{array} & % \hspace{.2in}
\begin{array}{|c|c|c|c|} \hline
&&\bullet& \\ \hline
\bullet&&&  \\ \hline
&\bullet&& \\  \hline
&&&\bullet  \\  \hline
\end{array} &  % \hspace{.2in}
\begin{array}{|c|c|c|c|} \hline
&&\bullet& \\ \hline
&\bullet&&  \\ \hline
&&&\bullet \\  \hline
\bullet&&&  \\  \hline
\end{array} \\[.4in]
\begin{array}{|c|c|c|c|} \hline
\bullet&&& \\ \hline
&&&\bullet  \\ \hline
&\bullet&& \\  \hline
&&\bullet&  \\  \hline
\end{array}  & %\hspace{.2in}
\begin{array}{|c|c|c|c|} \hline
&&&\bullet \\ \hline
&\bullet&&  \\ \hline
\bullet&&& \\  \hline
&&\bullet&  \\  \hline
\end{array} & % \hspace{.2in}
\begin{array}{|c|c|c|c|} \hline
&\bullet&& \\ \hline
&&\bullet&  \\ \hline
\bullet&&& \\  \hline
&&&\bullet  \\  \hline
\end{array} &  % \hspace{.2in}
\begin{array}{|c|c|c|c|} \hline
&\bullet&& \\ \hline
&&&\bullet  \\ \hline
&&\bullet& \\  \hline
\bullet&&&  \\  \hline
\end{array}
\end{array}
$
\end{center} % end tiny
\end{example}

For notational convenience, Costas arrays are often presented using a
certain one-line notation. Given a Costas array 
of order $n$, let $\pi(i) = j$ whenever the array contains a dot in cell $(i,j)$. 
A Costas array of order $n$ can be 
presented as the permutation $\pi = (\pi(1),\pi(2), \cdots   ,\pi(n-1),\pi(n))$.
We call this the \emph{permutation representation} of a Costas array.
Using this notation, the Costas arrays in the first row of Example~\ref{e1} 
have permutation representations $(1,2,4,3)$, $(4,3,1,2)$, $(2,1,3,4)$ and 
$(3,4,2,1)$, respectively.

A \emph{Costas latin square} of order $n$, denoted CLS($n$), is a latin square of 
order $n$ such that for each symbol $i$, $1 \leq i\leq n$,  a Costas array results 
if a dot is placed in the cells containing symbol $i$. Clearly a CLS($n$) is equivalent 
to $n$ disjoint Costas arrays of order $n$.
%, where the dots in each individual Costas array are given the same number in the latin square.
Costas latin squares were first defined and studied by Etzion \cite{E}.

%The example below is a CLS($4$) constructed from the four disjoint Costas 
%arrays of order $4$ given above.

\begin{example}
\label{e2}
The first four disjoint Costas arrays of order\/ $4$ given 
in Example~\ref{e1} are mutually disjoint and lead to
the following\/ $\mathrm{CLS}(4)$.

%{\large
\begin{center}
$\begin{array}{|c|c|c|c|} \hline
1&3&4&2 \\ \hline
3&1&2&4  \\ \hline
2&4&3&1 \\  \hline
4&2&1&3  \\  \hline
\end{array} $
\end{center} %
%}
\end{example}

Etzion \cite{E} defines a \emph{near Costas array} of \emph{order} $n \geq 2$  
to be an $n \times n$ array of dots and empty cells such that 
\begin{enumerate}
\item there are $n-1$ dots and $n^2 - n + 1$ empty cells, with
at most one dot in each row and column, and 
\item 
all the segments between pairs of dots differ
in length or in slope.
\end{enumerate}  
As with Costas arrays, we will say that two near Costas arrays of order $n$ are 
\emph{disjoint} if there is no cell in which both arrays have a dot. 

Let $D_{\mathit{near}}(n)$ denote the maximum number
of near Costas arrays of order $n$. 
%It is easy to see that $D_{\mathit{near}}(2) = 4$. 
Observe that 
$D_{\mathit{near}}(n) \leq \lfloor \frac{n^2}{n-1} \rfloor  = n+1$
for all $n \geq 3$.
%Furthermore, if $D_{\mathit{near}}(n) = n+1$ then the superposition
%of $n+1$ disjoint near Costas arrays has one empty cell (since $n^2 = (n+1)(n-1)+1$). 
%
%\medskip
%\noindent
%\emph{probably change the next definition to require n near Costas's and one Costas or order n}
%\medskip
%
A set of $n+1$ mutually disjoint near Costas arrays of order $n\geq 3$ is called
a \emph{full set} of arrays. In this case, if the filled cells of the $i$th near 
Costas array is given the symbol $i$, then superimposing all these near Costas 
arrays yields an $n \times n$ array on the symbol set $S=\{1,2, \ldots , n+1\}$ 
with exactly one empty cell. Such a square is termed a \emph{near Costas latin square} 
of order $n$, and abbreviated as an NCLS($n$). A somewhat stronger definition could
also be given, where one of the $n+1$ arrays is a Costas array and there is no empty cell.

%satisfying the following properties:
%\begin {enumerate}
%\item there is exactly one empty cell and all others contain exactly one symbol from $S$,
%\item in each row and column each symbol occurs at most once, and
%\item for any fixed symbol $i$ , the cells containing $i$ form a near Costas array 
%of order $n$ (hence each of the $n+1$ symbols occurs exactly $n-1$ times).
%\end{enumerate}

\newpage
\begin{example}
\label{e3}
A near Costas latin square of order 6  {\rm(}a $\mathrm{NCLS}(6)${\rm)}.
%Note that each of the seven symbols occurs exactly five times in the square.

$$
\begin {array} {|c|c|c|c|c|c|}\hline
6&1&7&4&2&3\\ \hline
5&&6&3&1&7\\ \hline
7&2&1&5&3&4\\ \hline
3&5&4&1&6&2\\ \hline
4&6&5&2&7&1\\ \hline
2&4&3&7&5&6\\ \hline
\end{array}
$$
\end{example}

The roles of the rows, columns and symbols of a latin square can be
interchanged. By interchanging the roles of rows and symbols, a Costas latin square of order $n$
can be transformed into an (equivalent) latin square $L = (l_{i,j})$ with the property 
that for any $1 \leq k \leq n-1$, 
$l_{i,j+k}-l_{i,j} = l_{i,m+k}-l_{i,m}$ implies that $j=m$. We call
such a square a \emph{row-Costas latin square}. Analogously, 
interchanging the roles of columns and symbols, we get
the definition of \emph{column-Costas latin squares}.
%\wnote{Better names? Did somebody else mention or consider these
%earlier?} 

We define one final related concept. A {\em Vatican square} of order $n$ is a latin square
$L = (l_{i,j})$ of order $n$ with the property that for any fixed $d$, $1 \leq d \leq n-1$,
all ordered pairs of the form $(l_{i,j},l_{i,j+d})$, $1 \leq i \leq n$, 
$1 \leq j \leq n-d$ are distinct. A \emph{Vatican Costas square} is a latin square that is both a Vatican square and a Costas latin square.   Vatican 
squares were introduced in \cite{GT2} while Vatican Costas squares
were first considered by Etzion in \cite{E}.  In a Vatican Costas square one can interchange
the roles of the rows, columns, and symbols to get 
other equivalent structures.

%The general definition of Vatican 
%squares was introduced in \cite{GT2}. A \emph{Vatican Costas square}
%of order $n$, considered by Etzion in \cite{E}, is a Costas latin square 
%$L = (l_{i,j})$ of order $n$ with the property that for any fixed $d$, $1 \leq d \leq n-1$,
%all ordered pairs of the form $(l_{i,j},l_{i,j+k})$, $1 \leq n$, 
%$1 \leq j \leq n-d$ are distinct. Also in this case one can interchange
%the roles of the rows, columns, and symbols to get various versions.

%We define one final related concept. The general definition of Vatican 
%squares was introduced in \cite{GT2}. A \emph{Vatican Costas square}
%of order $n$, considered by Etzion in \cite{E}, is a Costas latin square 
%$L = (l_{i,j})$ of order $n$ with the property that for any fixed $d$, $1 \leq d \leq n-1$,
%all ordered pairs of the form $(l_{i,j},l_{i,j+k})$, $1 \leq n$, 
%$1 \leq j \leq n-d$ are distinct. Also in this case one can interchange
%the roles of the rows, columns, and symbols to get various versions.

The paper is organized as follows. In Section~\ref{sect:constr}
some basic constructions for Costas arrays are surveyed and
applied to the problem of constructing Costas latin squares;
near Costas arrays and near Costas latin squares are also
considered. A computational study is carried out in 
Section~\ref{sect:enum} to enumerate Costas latin squares of
small order; also other related structures defined above are
considered.

\section{Constructions}

\label{sect:constr}

\subsection{Costas Latin Squares}

%\bigskip
One way to construct Costas latin squares would seem to be to start 
with an initial Costas array of order $n$ and then move each of the 
dots to the right $k$ cells (mod $n$) to construct the $k$th array 
(where $0\leq k\leq n-1$).  Certainly these arrays will be mutually 
disjoint, but in general they will not be Costas arrays. However, this 
technique will succeed provided that we start with a singly periodic 
Costas array, which we now define.

Let an $n \times n$ array be \emph{left-right extended} by putting a dot in cell
$(i,j+n)$ if and only if there is a dot in cell $(i,j)$.
A \emph{singly periodic Costas array} of order $n$ is  an $n\times
n$ array such that every $n \times n$ window in its left-right extension is
a Costas array of order $n$. The following result is now obvious.

\begin{thm}  
\label{thm1}
If there exists a singly periodic Costas array 
of order\/ $n$, then there exists a\/
$\mathrm{CLS}(n)$.
\end{thm}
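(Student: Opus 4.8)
The plan is to take the singly periodic Costas array supplied by the hypothesis, form its $n$ cyclic column-shifts, and show that these constitute a full set of $n$ mutually disjoint Costas arrays of order $n$; since such a set is, as noted in the definition of a Costas latin square, equivalent to a $\mathrm{CLS}(n)$, this yields the theorem. Concretely, I would fix a singly periodic Costas array $A$ of order $n$ and, for each $k$ with $0 \leq k \leq n-1$, let $A_k$ be the array obtained from $A$ by moving every dot $k$ cells to the right modulo $n$, i.e.\ sending a dot in column $j$ to column $((j-1+k) \bmod n) + 1$. Note that $A_0 = A$.

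The first substantive step is to argue that each $A_k$ is a Costas array. Here I would invoke the left-right extension $\widehat{A}$ of $A$: because $\widehat{A}$ repeats with horizontal period $n$, each cyclic right-shift $A_k$ coincides with exactly one of the $n \times n$ windows of $\widehat{A}$ (the window whose starting column is the appropriate residue class). By the definition of a singly periodic Costas array, every such window is a Costas array of order $n$, so each $A_k$ is a Costas array.

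The second step is mutual disjointness. Since $A$ is a Costas array, each row contains exactly one dot; say the unique dot of row $i$ lies in column $c_i$. In $A_k$ the unique dot of row $i$ then lies in column $((c_i - 1 + k) \bmod n) + 1$. Hence $A_k$ and $A_{k'}$ can share a dot in row $i$ only if $c_i + k \equiv c_i + k' \pmod{n}$, that is, only if $k \equiv k' \pmod{n}$; for distinct $k, k' \in \{0, 1, \ldots, n-1\}$ this never occurs. As this holds for every row, the arrays $A_0, \ldots, A_{n-1}$ are pairwise disjoint, giving $n$ mutually disjoint Costas arrays and hence a $\mathrm{CLS}(n)$.

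As the authors themselves flag the result as obvious, I expect no genuine obstacle; the only points demanding a little care are the index bookkeeping in matching a cyclic shift to the correct window of $\widehat{A}$, and the explicit use of the one-dot-per-row property of Costas arrays, which is exactly what makes the cyclic shift act as a fixed-point-free permutation of the dots within each row and thereby forces disjointness.
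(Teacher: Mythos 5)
Your proposal is correct and follows exactly the route the paper intends: the paper introduces the cyclic-shift idea in the paragraph preceding the theorem and then declares the result obvious, with the shifted arrays being Costas precisely because they are windows of the left-right extension. You have simply written out the disjointness and window-matching details that the authors leave implicit.
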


Next, we describe the well known \emph{Welch construction} for Costas arrays.
Let $p$ be prime and $\alpha $ be 
a primitive element in the field $\GF{p}$.  Let $n=p-1$.
A Costas array of order $n$ is obtained
by placing a dot at $(i,j)$ if and only if $i=\alpha ^j$,
for $0 \leq j < n$, and
$i=1,\dots, n$.  We get the following result,
which is implicit in \cite{GET} and explicit in \cite{E}.

\begin{corollary} \label{cor2} 
There exists a\/ $\mathrm{CLS}(p-1)$
for all primes $p$.
\end{corollary}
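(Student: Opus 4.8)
The plan is to show that the Welch construction actually yields a \emph{singly periodic} Costas array of order $n=p-1$, and then to invoke Theorem~\ref{thm1}. Write the Welch array via the rule that a dot sits in cell $(i,j)$ exactly when $i=\alpha^j$, indexing columns by $j\in\{0,1,\dots,n-1\}$ and reading $\alpha^j$ as its least positive residue in $\{1,\dots,n\}$. Since $\alpha$ is primitive, the powers $\alpha^0,\alpha^1,\dots,\alpha^{n-1}$ run through all nonzero residues, so each row and each column carries exactly one dot; that this base array is Costas is the standard Welch fact, and I would recall its proof only to reuse the computation below.

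First I would observe that the left-right extension of the array agrees with extending the defining formula $i=\alpha^j$ to all integers $j$. This is Fermat's little theorem: $\alpha^{j+n}=\alpha^{j}\cdot\alpha^{p-1}=\alpha^{j}$ in $\GF{p}$, so the value sequence has period $n$, which is precisely the rule ``dot at $(i,j+n)$ iff dot at $(i,j)$'' defining the extension. Consequently an arbitrary $n\times n$ window consists of the columns $\{c,c+1,\dots,c+n-1\}$ populated by the values $\alpha^c,\alpha^{c+1},\dots,\alpha^{c+n-1}$.

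Next I would check that every such window is a Costas array. Each window is again a permutation array, because the $n$ consecutive powers $\alpha^c,\dots,\alpha^{c+n-1}$ form $\alpha^c\cdot\{1,\alpha,\dots,\alpha^{n-1}\}$, a scaling of all the nonzero residues, hence are distinct and exhaust $\{1,\dots,n\}$. For the distinct-differences condition, fix a horizontal displacement $h$ with $1\le h\le n-1$ and two in-window columns $j$ and $j+h$; the vertical displacement is $\alpha^{j+h}-\alpha^{j}=\alpha^{j}(\alpha^{h}-1)$. Because $0<h<n$ forces $\alpha^{h}\neq 1$, the factor $\alpha^{h}-1$ is a nonzero element of $\GF{p}$, so multiplication by it is a bijection; as $\alpha^{j}$ ranges over distinct values so do the products. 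Thus within each window any two segments sharing a horizontal displacement have distinct vertical displacements, so no two segments agree in both length and slope, and both Costas conditions hold.

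Having shown the Welch array is singly periodic, Theorem~\ref{thm1} immediately produces a $\mathrm{CLS}(p-1)$, completing the proof; the case $p=2$ (order $1$) is trivial. The only real content is the periodicity step: one must verify that the left-right extension genuinely coincides with the algebraic continuation $i=\alpha^{j}$, since it is this identification that lets the single computation $\alpha^{j}(\alpha^{h}-1)$ certify every window at once rather than only the base array.
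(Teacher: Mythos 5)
Your proposal is correct and follows the same route as the paper: establish that the Welch array is singly periodic and invoke Theorem~\ref{thm1}. The only difference is that the paper cites the single periodicity of Welch arrays as a known fact, whereas you supply the (correct) verification via the identity $\alpha^{j+h}-\alpha^{j}=\alpha^{j}(\alpha^{h}-1)$ applied uniformly to every window.
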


\begin{proof}
It is known that Costas arrays constructed 
via the Welch construction are singly periodic.
Apply Theorem \ref{thm1}.
\end{proof}

\noindent
\emph{Remark.} In \cite{GET}, this method was used to
construct a Vatican square.

%\medskip
%\noindent
%\emph{note -- this may already be known.  See theorem D in \cite{GET}}

\bigskip
Using 2 for the primitive element in $\GF{5}$, one obtains the following 
Costas array of order $4$, given on the left below. (Note here that the columns 
are labeled $0,1,2,3,$ while the rows are labeled $1,2,3,4$ from the bottom.)  
This Costas array is singly periodic. On the right is the CLS($4$) obtained 
from this array. It is interesting to note that this Costas latin square 
is different from any dihedral transformation of the CLS($4$) presented 
in Example \ref{e1} (e.g., consider the front and back diagonals). 

\begin{center}
$\begin{array}{|c|c|c|c|} \hline
&&\bullet& \\ \hline
&&&\bullet  \\ \hline
&\bullet&& \\  \hline
\bullet&&&  \\  \hline 
\end{array} $ \hspace{.8in}
$\begin{array}{|c|c|c|c|} \hline
3&4&1&2 \\ \hline
2&3&4&1  \\ \hline
4&1&2&3 \\  \hline
1&2&3&4  \\  \hline  
\end{array} $ 
\end{center}

Unfortunately, the only known singly periodic Costas arrays of order $n$ come
from the Welch construction when $n=p-1$. It is also known that 
a singly periodic Costas array of order $n$ does not exist if
$n>1$ is odd, or if $n \in \{8, 14, 20 \}$. %(get an original reference) 

Corollary \ref{cor2} gives an infinite class of CLS($n$).
Etzion \cite{E} gave another infinite class which contains no
new orders but which are inequivalent to the previous class. 
Etzion's second infinite class is based on the {\it Golomb construction},
which we now describe.  

Let $\alpha$ and $\beta$ be primitive elements
in a finite field $\mathbb{F}_q$. Denote $n = q-2$. For $1 \leq i \leq n$,
$1 \leq j \leq n$, place a dot in cell $(i,j)$ of an
$n \times n$ array if and only if $\alpha^i + \beta^j = 1$.
Denote the resulting array by $G(\alpha, \beta)$.
Then $G(\alpha, \beta)$ is a Costas array of order $n$.

Etzion's infinite class is constructed as follows.
Suppose that $q = 2^r$ and suppose that
$q-1$ is a (Mersenne) prime. Let $\alpha \in \mathbb{F}_q$ be
a primitive element. Note that $\alpha^i$ is primitive for $1 \leq i \leq n$
because $q-1$ is prime. It is not difficult to verify that the
$n$ Costas arrays $G(\alpha, \alpha^i)$ are disjoint, for
$1 \leq i \leq n$. This yields a CLS($2^r-2$) whenever $2^r-1$ is
a Mersenne prime.

\subsection{Near Costas Latin Squares}

%I thought it might be interesting to play around with the
%Golomb construction to see when disjoint Costas or
%near-Costas arrays could be constructed. Here are some
%observations.
%
%\subsection{Applications of  the Golomb Construction}

We begin with a simple observation.

\begin{lemma} Suppose there exists a $\mathrm{CLS}(n)$.
Then $D_{\mathit{near}}(n) \geq 
n$.
\end{lemma}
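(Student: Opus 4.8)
The plan is to exploit the fact that a near Costas array is nothing more than a Costas array with a single dot removed. By hypothesis there exists a $\mathrm{CLS}(n)$, which the excerpt tells us is equivalent to a collection $A_1, A_2, \ldots, A_n$ of $n$ mutually disjoint Costas arrays of order $n$. First I would fix this collection and, from each $A_k$, simply delete one (say, arbitrarily chosen) dot, calling the resulting array $B_k$.

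Next I would check that each $B_k$ is a near Costas array of order $n$. The count is immediate: $A_k$ had $n$ dots, so $B_k$ has $n-1$ dots and $n^2-n+1$ empty cells, as required. The condition of having at most one dot per row and per column is inherited from $A_k$, which had exactly one dot per row and column. The only point needing a word of justification is the distinctness-of-segments condition, and here the key observation is that the set of segments determined by the dots of $B_k$ is a \emph{subset} of the set of segments determined by the dots of $A_k$; since the latter already pairwise differ in length or slope, so do the former. Deleting a dot can only remove constraints, never violate them.

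Then I would verify that the arrays $B_1, \ldots, B_n$ are mutually disjoint. Since the dots of $B_k$ form a subset of the dots of $A_k$, and the original arrays $A_1, \ldots, A_n$ were mutually disjoint (no cell carrying dots in two of them), no cell can carry dots in two distinct $B_k$'s either. Hence the $B_k$ are $n$ mutually disjoint near Costas arrays of order $n$, which gives $D_{\mathit{near}}(n) \geq n$ and completes the argument.

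I do not anticipate any genuine obstacle in this proof: the entire content is the trivial remark that dot-deletion turns a Costas array into a near Costas array while preserving disjointness. If anything, the only thing worth flagging is that this lower bound sits just one below the universal upper bound $D_{\mathit{near}}(n) \leq n+1$ recorded earlier, so it is essentially best possible and the interesting question — addressed presumably in the surrounding text — is when the extra $(n+1)$st array can be adjoined.
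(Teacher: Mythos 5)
Your proposal is correct and follows exactly the paper's argument: take the $n$ disjoint Costas arrays given by the $\mathrm{CLS}(n)$ and delete one dot from each, noting that the near Costas conditions and disjointness are inherited. The paper states this in two sentences; your version merely spells out the (easy) verifications.
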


\begin{proof}
Given a $\mathrm{CLS}(n)$, we have $n$ disjoint
Costas arrays of order $n$. Remove one dot from each of 
these $n$ arrays, yielding $n$ disjoint near-Costas arrays
of order $n$.
\end{proof}

\begin{corollary}
\label{NCLS.cor}
If $p$ is a prime, then 
$D_{\mathit{near}}(p-1) \geq 
p-1$.
\end{corollary}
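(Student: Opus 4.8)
The plan is to chain together the two results that immediately precede this corollary. The statement to be proved is that $D_{\mathit{near}}(p-1) \geq p-1$ for every prime $p$, and this is really just a composition of Corollary~\ref{cor2} with the Lemma that precedes the corollary.

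First I would invoke Corollary~\ref{cor2}, which guarantees the existence of a $\mathrm{CLS}(p-1)$ for every prime $p$. This supplies the input required by the Lemma. Setting $n = p-1$, the Lemma then asserts that whenever a $\mathrm{CLS}(n)$ exists, we have $D_{\mathit{near}}(n) \geq n$. Substituting $n = p-1$ yields $D_{\mathit{near}}(p-1) \geq p-1$, which is exactly the claim. So the proof is a one-line specialization.

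There is essentially no obstacle here: the corollary is a direct consequence obtained by feeding the conclusion of Corollary~\ref{cor2} into the hypothesis of the Lemma. The only point worth a moment's care is the quantifier bookkeeping — one should note that $p$ being prime is precisely the condition under which Corollary~\ref{cor2} produces the $\mathrm{CLS}(p-1)$, so the prime hypothesis in the present corollary is exactly what is needed and nothing more is required. Thus the whole argument amounts to:

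\begin{proof}
By Corollary~\ref{cor2}, a $\mathrm{CLS}(p-1)$ exists for every prime $p$. Applying the preceding Lemma with $n = p-1$ gives $D_{\mathit{near}}(p-1) \geq p-1$.
\end{proof}
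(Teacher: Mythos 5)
Your proof is correct and is exactly the argument the paper intends: the corollary follows by feeding the $\mathrm{CLS}(p-1)$ guaranteed by Corollary~\ref{cor2} into the hypothesis of the immediately preceding lemma with $n = p-1$. Nothing further is needed.
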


We now provide another way of obtaining $n-1$ near-Costas arrays
of order $n-1$. This method works whenever $n$ is a prime power;
it is based on the {\it Lempel construction}
which is the special case of the Golomb construction where $\beta = \alpha$.
We will use the following slight variation:
Let $q$ be a prime power and label the rows and columns of $q-1$ 
square arrays of side $q-1$ by the integers $0, \dots , q-2$.
The arrays will be named as $A_c$, $c  \in \mathbb{F}_q^*$.
For each such array $A_c$, place a dot in cell $(i,j)$ whenever $\alpha^i +\alpha ^j = c$. 
 Clearly these $q-1$ arrays are all disjoint and symmetric. Each array
 contains $q-2$ dots which miss exactly one row and column
(namely, row and column $x$, where $\alpha ^x = c$).  

We now show 
that each of these arrays is a near-Costas array. This can be done by
modifying the proof of 
\cite[Theorem 2]{Gol84}. Suppose that
$A_c$ contains dots in cells $(i,j)$, $(i+u,j+v)$, $(i',j')$ and $(i'+u,j'+v)$,
where $i \neq i'$, $j \neq j'$ and $u,v \neq 0$.
Then
\[ \alpha^i + \alpha^j = \alpha^{i+u} + \alpha^{j+v} =
\alpha^{i'} + \alpha^{j'} = \alpha^{i'+u} + \alpha^{j'+v} = c.\]
We then obtain
\[ \alpha^{i+u} + \alpha^v(c - \alpha^i) = \alpha^{i'+u} + \alpha^v(c - \alpha^{i'}) = c,\]
and therefore
\[ \alpha^{i+u} - \alpha^{i+v} = \alpha^{i'+u} - \alpha^{i'+v} = c(1 - \alpha^v).\]
We have $c \neq 0$, and $\alpha^v \neq 1$ follows because $v \neq 0$.
Hence, 
\[ \alpha^i(\alpha^{u} - \alpha^{v}) = \alpha^{i'}(\alpha^{u} - \alpha^{v}) \neq 0.\]
Notice that $u \neq v$ because $\alpha^i(\alpha^{u} - \alpha^{v}) \neq 0$.
However, since $u \neq v$, we obtain $\alpha^i = \alpha^{i'}$, which is a contradiction 
because $i \neq i'$. 

Summarising the above discussion, we have the following 
theorem.
\begin{theorem}
\label{NCLS.thm}
If $q$ is a prime power, then 
$D_{\mathit{near}}(q-1) \geq 
q-1$.
\end{theorem}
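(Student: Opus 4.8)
The plan is to verify that the $q-1$ arrays $A_c$, $c \in \mathbb{F}_q^*$, constructed above are pairwise disjoint near-Costas arrays of order $q-1$; since there are exactly $q-1$ of them, this at once gives $D_{\mathit{near}}(q-1) \geq q-1$. Three things must be checked: that each $A_c$ has the correct dot pattern (the right number of dots, at most one per row and column), that the arrays are pairwise disjoint, and that each $A_c$ satisfies the distinct-segment (Costas) condition.

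The first two points are quick field-arithmetic observations. Fixing $c$ and writing $\alpha^x = c$, a dot lies in cell $(i,j)$ of $A_c$ exactly when $\alpha^j = c - \alpha^i$; for each $i \neq x$ the right-hand side is a nonzero field element, determining a unique $j \neq x$, while $i = x$ admits no $j$. Hence $A_c$ has exactly $q-2$ dots, one per row and---by the symmetry $\alpha^i + \alpha^j = \alpha^j + \alpha^i$---one per column, missing only row and column $x$; this is precisely the dot pattern of a near-Costas array of order $q-1$. Disjointness is equally immediate: a cell $(i,j)$ receives a dot in $A_c$ only when $c = \alpha^i + \alpha^j$, and this sum is a single field element, so $(i,j)$ carries a dot in at most one array.

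The substantive step, and the one I expect to be the main obstacle, is the distinct-segment condition; this is exactly the computation carried out in the discussion above. Because $A_c$ has at most one dot per row and per column, any two of its dots differ in both coordinates, so it suffices to exclude a repeated displacement $(u,v)$ with $u,v \neq 0$. Assuming dots at $(i,j),(i+u,j+v)$ and at $(i',j'),(i'+u,j'+v)$ all lie in $A_c$ with $i \neq i'$, and substituting $\alpha^j = c - \alpha^i$, one reaches $\alpha^i(\alpha^u - \alpha^v) = \alpha^{i'}(\alpha^u - \alpha^v) = c(1 - \alpha^v)$. Since $c \neq 0$ and $\alpha^v \neq 1$ (as $v \neq 0$), the common value is nonzero, forcing $\alpha^u \neq \alpha^v$; cancelling $\alpha^u - \alpha^v$ then gives $\alpha^i = \alpha^{i'}$, hence $i = i'$, a contradiction. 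With all three properties established, collecting the $q-1$ arrays $A_c$ proves the theorem.
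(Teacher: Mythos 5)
Your proposal is correct and follows essentially the same route as the paper: it uses the Lempel-type arrays $A_c$ defined by $\alpha^i+\alpha^j=c$, observes the dot count and disjointness directly from the field arithmetic, and establishes the Costas condition by the same substitution leading to $\alpha^i(\alpha^u-\alpha^v)=\alpha^{i'}(\alpha^u-\alpha^v)=c(1-\alpha^v)\neq 0$. The only difference is presentational: you spell out the row/column counting and disjointness checks that the paper dismisses as ``clear.''
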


\noindent\emph{Remark.}  Theorem \ref{NCLS.thm} is more general than
Corollary \ref{NCLS.cor} in that it applies to prime powers
rather than just primes.

\begin{example}
Let $q = 7$ and take $\alpha = 3$. We present the superposition of
$A_1, \dots, A_6$: 

\[
\begin{array}{|c|c|c|c|c|c|}
 \hline
2 & 4 & 3 &  & 5 & 6 \\ \hline
4 & 6 & 5 & 2 &  & 1 \\ \hline
3 & 5 & 4 & 1 & 6 &  \\ \hline
 & 2 & 1 & 5 & 3 & 4 \\ \hline
5 &  & 6 & 3 & 1 & 2 \\ \hline
6 & 1 &  & 4 & 2 & 3 \\ \hline
\end{array}
\]
\end{example}

Next, we use the Golomb construction 
to construct disjoint near Costas
arrays of order $q-2$, for certain prime powers $q \equiv 3 \bmod 4$.
Since $q \equiv 3 \bmod 4$ is a prime power, $-1$ is a
quadratic non-residue modulo $q$. Suppose also 
that $q = 2p+1$ where $p$ is an odd prime. 
Then $\mathbb{F}_q$ has $\phi(2p) = p-1$
primitive roots, say $\alpha_k$, $k = 1, \dots , p-1$.

Denote $\gamma = \alpha_k$, $\delta = \alpha_{\ell}$
and $\epsilon = \alpha_m$, where $\ell \neq m$.
We will show that $G(\gamma, \delta)$ 
and $G(\gamma, \epsilon)$ contain exactly one
common dot.
Suppose to the contrary that $\gamma^i + \delta^j = \gamma^i + \epsilon^j = 1$.
Then  $\delta^j = \epsilon^j$. Denote $\zeta = \delta / \epsilon$;
then $\zeta^j = 1$. It is clear that $\mathsf{ord}(\zeta) \mid 2p$,
so  $\mathsf{ord}(\zeta) \in \{1,2,p,2p\}$. We consider each possibility
in turn.

\begin{description}
\item [case 1: $\mathsf{ord}(\zeta) = 1$.]  Here $\zeta  = 1$,
so $\delta = \epsilon$, which is a contradiction.

\item [case 2: $\mathsf{ord}(\zeta) = 2$.]
  Here $\zeta  = -1$ 
and $\delta = -\epsilon$. However, $-1$ is a quadratic non-residue,
so $\delta$ and $\epsilon$ cannot both be primitive elements.
This is a contradiction.

\item [case 3: $\mathsf{ord}(\zeta) = p$.]
Here $\zeta$ is a quadratic residue and
$\delta ^j = \epsilon ^j = -1$. This implies that $j = p$
and $i$ is the unique element such that $\gamma^i = 2$.

\item [case 4: $\mathsf{ord}(\zeta) = 2p$.]
Here $\zeta$ is a primitive element 
and $\delta ^j = \epsilon ^j =  1$. This implies that $j = 2p$, which
is a contradiction because $j \leq 2p-1$.
\end{description}

It follows from this discussion that 
the $p-1$ Costas arrays $G(\alpha_1, \alpha_i)$ ($1 \leq i \leq p-1$)
all contain a single common dot. On removing this dot, we obtain 
$p-1$ disjoint near-Costas arrays of order $2p-1$.
Summarising, we have the following.

\begin{theorem}
\label{thm:near}
Suppose that $q \equiv 3 \bmod 4$ is a prime power and suppose 
that $(q-1)/2$ is prime. Then $D_{\mathit{near}}(q-2) \geq 
(q-3)/2$.
\end{theorem}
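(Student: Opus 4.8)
The plan is to exhibit $(q-3)/2 = p-1$ pairwise disjoint near Costas arrays of order $q-2$ explicitly, by means of the Golomb construction. First I would record the arithmetic identity $(q-3)/2 = p-1$, which holds because $q = 2p+1$; so it suffices to produce $p-1$ such arrays. Fix one primitive root $\gamma = \alpha_1$ and consider the family $G(\gamma,\alpha_i)$ for $i = 1,\dots,p-1$, each of which is a Costas array of order $q-2$ by the Golomb construction. The strategy is to show that all $p-1$ of these Costas arrays pass through a single common cell and agree in no other cell; deleting that one shared dot from each then yields $p-1$ near Costas arrays that are pairwise disjoint.

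The heart of the argument is the claim that any two members $G(\gamma,\delta)$ and $G(\gamma,\epsilon)$ (with $\delta = \alpha_\ell$, $\epsilon = \alpha_m$, $\ell \neq m$) meet in exactly one cell, and that this cell does not depend on the choice of $\delta$ and $\epsilon$. To analyze a common dot $(i,j)$ I would set $\gamma^i + \delta^j = \gamma^i + \epsilon^j = 1$, deduce $\delta^j = \epsilon^j$, and study $\zeta = \delta/\epsilon$, whose order divides $q-1 = 2p$. The hypothesis that $p = (q-1)/2$ is prime is exactly what forces $\mathsf{ord}(\zeta) \in \{1,2,p,2p\}$, reducing the problem to four cases. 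I would rule out $\mathsf{ord}(\zeta) = 1$ (it forces $\delta = \epsilon$) and $\mathsf{ord}(\zeta) = 2p$ (it forces $j = 2p > q-2$), and here the hypothesis $q \equiv 3 \bmod 4$ enters decisively: it makes $-1$ a quadratic non-residue, which eliminates $\mathsf{ord}(\zeta) = 2$, since otherwise $\delta = -\epsilon$ could not have both factors primitive. The only surviving case, $\mathsf{ord}(\zeta) = p$, pins down $j = p$ (from $\delta^j = -1$) and then $\gamma^i = 2$, determining $i$ uniquely.

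The key point I would emphasize is that the resulting common cell $(i_0,p)$, with $\gamma^{i_0} = 2$, depends only on the fixed primitive root $\gamma$ and not on $\delta$ or $\epsilon$. Consequently \emph{every} pair in the family shares precisely this one cell, so all $p-1$ arrays pass through $(i_0,p)$ and agree nowhere else. I expect this location-independence to be the subtle point to state cleanly: it is what upgrades ``each pair meets in one cell'' into ``removing a single dot from each array makes the whole family pairwise disjoint.'' Once this is in place, I would finish by observing that each $G(\gamma,\alpha_i)$ with the dot at $(i_0,p)$ deleted has $q-3$ dots, at most one per row and column, and inherits the distinct-segments property from the ambient Costas array, hence is a near Costas array of order $q-2$; the $p-1$ of them are pairwise disjoint, giving $D_{\mathit{near}}(q-2) \geq p-1 = (q-3)/2$.
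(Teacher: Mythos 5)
Your proposal is correct and follows essentially the same route as the paper: the Golomb arrays $G(\gamma,\alpha_i)$ with a fixed primitive root $\gamma$, the four-case analysis on $\mathsf{ord}(\delta/\epsilon)\mid 2p$ using $q\equiv 3\bmod 4$ to kill the order-$2$ case, and the identification of the unique common dot at $(i_0,p)$ with $\gamma^{i_0}=2$ before deleting it. Your explicit remark that this common cell is independent of $\delta$ and $\epsilon$ is a point the paper leaves implicit, but it is the same argument.
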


Theorem \ref{thm:near} is illustrated by the following example.

\begin{example}
Let $q = 11$, so $n = 9$ and
$p = 5$. The four primitive elements modulo $11$ are 
$2$, $8$, $7$ and $6$. We present the superposition of
$G(2, 2)$, $G(2, 8)$, $G(2, 7)$, $G(2, 6)$. The unique common dot
is in position $(1,5)$.
If this dot is deleted, then we get 
four disjoint near Costas arrays of order $9$. 

\[
\begin{array}{|c|c|c|c|c|c|c|c|c|}
 \hline
 & & & & \bullet & & & & \\ \hline
8 & & 2 & & & &  6 & & 7 \\ \hline
& 2 && 8 && 7 && 6 & \\ \hline
7 && 6 &&&& 2 && 8 \\ \hline
2 && 7 &&&& 8 && 6 \\ \hline
& 6 && 7 && 8 && 2 & \\ \hline
& 7 && 2 && 6 && 8 & \\ \hline
& 8 && 6 && 2 && 7& \\ \hline
6 && 8 &&&& 7 && 2  \\ \hline
\end{array}
\]
\end{example}

\noindent\emph{Remark.}
It may be the case that various Costas arrays of the form $G(\alpha, \alpha)$ are disjoint.
When $q = 11$, one finds that the four Costas arrays $G(2, 2)$, $G(8, 8)$, $G(7, 7)$
and $G(6, 6)$ are disjoint. The resulting array is presented in the next example.

\newpage
\begin{example}The four disjoint Costas arrays $G(2, 2)$, $G(8, 8)$, $G(7, 7)$
and $G(6, 6)$.
\[
\begin{array}{|c|c|c|c|c|c|c|c|c|}
 \hline
6 & 7 &   & 8 & 2 &   &   &   &   \\ \hline
7 &   & 2 & 6 &   & 8 &   &   &   \\ \hline
  & 2 & 8 &   & 7 & 6 &   &   &   \\ \hline
8 & 6 &   &   &   &   & 2 & 7 &   \\ \hline
2 &   & 7 &   &   &   & 8 &   & 6 \\ \hline
  & 8 & 6 &   &   &   &   & 2 & 7 \\ \hline
  &   &   & 2 & 8 &   & 7 & 6 &   \\ \hline
  &   &   & 7 &   & 2 & 6 &   & 8 \\ \hline
  &   &   &   & 6 & 7 &   & 8 & 2  \\ \hline
\end{array}
\]
\end{example}

It is not obvious how to derive a simple algebraic formula
to determine when Costas arrays of the form $G(\alpha, \alpha)$ are disjoint.
So at present we have no theorem that generalises the previous example.

\section{Classification of Squares}

\label{sect:enum}

The concept of equivalence (or isomorphism) is central in any
classification of combinatorial structures.
Two Costas arrays are said to be \emph{equivalent} if there
is a mapping in the dihedral group of rotations and reflections 
of the square (which has order 8) that maps one array onto
the other. Two Costas latin squares are \emph{equivalent} if there
is a mapping in the same group that maps every Costas array
of one square onto a Costas array of the other. (Note that with
this definition, the numbers given to the Costas arrays in the
Costas latin square are irrelevant.)

The set of all mappings that map a Costas array (or Costas latin square)
onto itself forms the \emph{automorphism group} of the respective structure.
The definitions of equivalence and
automorphism groups for near Costas arrays and near Costas 
latin squares are analogous. 

\subsection{Costas Latin Squares}

Let us first have a brief look at Costas latin squares with
very small parameters.
It is trivial to construct CLS($1$) and CLS($2)$; these are
unique up to equivalence.
There is no CLS($3$) since no Costas array of order $3$ can contain 
a dot in the middle cell. However, it is easy to construct
two disjoint Costas arrays of order 3, so it follows that $D(3) = 2$.

A CLS($4$) is presented in Example \ref{e2}. In the next case, $n=5$, 
it turns out that there is no Costas latin square of order 5. This can 
still be done by hand (for example, with a case-by-case argument). 
The following example shows that there are four disjoint Costas arrays 
of order 5, so it follows that $D(5) = 4$.

\begin{example}
Here are four disjoint Costas arrays of order $5$,
given in permutation representation:
\begin{center}$(1,4,5,3,2)$, $(2,5,3,4,1)$,
$(3,1,2,5,4)$ and $(4,3,1,2,5)$.\end{center}

%\begin{center} $
%\begin{array}{|c|c|c|c|c|} \hline
%1&4&5&3&2 \\ \hline
%\end{array} \hspace{.2in}
%\begin{array}{|c|c|c|c|c|} \hline
%2&5&3&4&1 \\            \hline
%\end{array} \hspace{.2in}
%\begin{array}{|c|c|c|c|c|} \hline
%3&1&2&5&4 \\      \hline
%\end{array} \hspace{.2in}
%\begin{array}{|c|c|c|c|c|} \hline
%4&3&1&2&5 \\      \hline
%\end{array}$
%\end{center}

\noindent
It is interesting to note that a $5$th Costas array which is disjoint to the 
other four arrays in three cells also exists. This Costas array has 
permutation representation \[(5,2,4,3,1).\]
%\begin{array}{|c|c|c|c|c|} \hline
%5&2&4&3&1 \\ \hline
%\end{array} $
If we superimpose these five Costas arrays, we get the following array.

\begin{center}
$\begin{array}{|c|c|c|c|c|} \hline
1&2&3&4&5 \\ \hline
3&5&4&1&2  \\ \hline
4&3&2&5&1 \\  \hline
 &4&1,5&2&3  \\  \hline
2,5&1& &3&4 \\  \hline
\end{array} $
\end{center} %
\end{example}

We know of no better way than a computer search to handle
orders greater than $5$. For a given order $n$, the largest 
possible number of mutually disjoint Costas arrays can be found by
solving the following instance of the maximum clique problem.
Form a graph $G$ with one vertex for each Costas array and with
an edge between two vertices exactly when the corresponding arrays
are disjoint. Any clique in $G$ corresponds to a set of mutually
disjoint Costas arrays, so we may now search for the size of a
maximum clique. The \emph{Cliquer} software \cite{NO} was used
to search for (maximum) cliques
in the current work. All Costas arrays of order between
1 and 27 can be obtained in electronic form from \cite{Costas}.  

Whenever there are $n$ disjoint Costas arrays of order $n$, 
one can find all cliques of size $n$ in the above mentioned graph
to get all possible Costas latin squares. However, utilizing the
fact that the arrays then cover all the cells of the square, such cases are
for performance reasons preferably considered as instances of
the exact cover problem. In the \emph{exact cover problem},
we are given a set $S$ and a collection $U$ of its subsets, and the
task is to form a partition of $S$ by using sets in $U$. Here,
$S$ is the set of all cells of the square, and $U$ has one
set for each Costas array (consisting of the cells where there
are dots). The \textsf{libexact} library \cite{KP} 
was used here to solve instances of the exact cover problem.

The computational results for all orders up to 27 are summarized in 
Table \ref{tab:c}. The columns in the table are the order $n$,
the total number of Costas arrays Nc, the number of equivalence
classes of Costas arrays Nce,
the largest number of disjoint Costas arrays $D(n)$, and, if a
full set of disjoint Costas arrays exists, the total number of Costas latin
squares Nl and the number of equivalence classes of Costas latin
squares Nle. The entries for Nc and Nce have been taken
from \cite{D,GG,TD}.

\renewcommand{\tabcolsep}{11pt}

\begin{table}[htbp]
\caption{Computational results for Costas arrays}
\label{tab:c}
\begin{center}
\begin{tabular}{|crrrrr|}\hline
$n$& Nc   & Nce & $D(n)$& Nl & Nle\\\hline
1  & 1 	  & 1   &  1 & 1  & 1  \\
2  & 2 	  & 1   &  2 & 1  & 1\\
3  & 4 	  & 1   &  2 &    &\\
4  & 12   & 2   &  4 & 7  & 3\\
5  & 40   & 6   &  4 &    &\\
6  & 116  & 17  &  6 & 124& 26\\
7  & 200  & 30  &  6 &    &\\
8  & 444  & 60  &  8 & 312& 85\\
9  & 760  & 100 &  8 &    &\\
10 & 2160 & 277 & 10 & 128& 30\\
11 & 4368 & 555 & 10 &    &\\
12 & 7852 & 990 & 12 & 16346 & 3761\\
13 & 12828& 1616& $\leq 12$  &    &\\
14 & 17252& 2168& $\leq 13$  &    &\\
15 & 19612& 2467& $\leq 14$  &    &\\
16 & 21104& 2648& 16 & 32768 & 8256\\
17 & 18276& 2294& $\leq 16$  &    &\\
18 & 15096& 1892& 18 & 5832 & 756\\
19 & 10240& 1283& $\leq 18$  &    &\\
20 & 6464 & 810 & $\leq 19$  &    &\\
21 & 3536 & 446 & 11 &    &\\
22 & 2052 & 259 & 22 & 200& 30\\
23 & 872  & 114 &  9 &    &\\
24 & 200  & 25  &  8 &    &\\
25 & 88   & 12  &  5 &    &\\
26 & 56   & 8   &  6 &    &\\
27 & 204  & 29  &  8 &    &\\\hline
\end{tabular}
\end{center}
\end{table}

\renewcommand{\tabcolsep}{6pt}

It is easy to determine the automorphism groups of the classified
Costas latin squares. The results were then validated by applying
the orbit--stabilizer theorem to get the total number, which 
coincides with the number in the column Nl of Table~\ref{tab:c}.

Note that there exists a CLS($8$); this was also observed in \cite{E}.  
This is the only order $n$ for which there exists a CLS($n$) but $n+1$ is 
not a prime number. An example of a CLS(8) can be found in the Appendix.

Table~\ref{tab:c} lends additional numerical evidence to the truth of 
the conjecture of Etzion \cite[Conjecture 2]{E} that there is no CLS($n$) for any 
odd $n \geq 3$.

%\bigskip
%\emph{Do we want to list one of each order (a nonglide one) in an appendix??}

\vspace*{-.1in}
\subsection{Disjoint Near Costas Arrays}
\vspace*{-.1in}

For near Costas arrays, one can carry out a similar computational 
study as for Costas arrays in the previous subsection.
In this case exact cover cannot be used in a direct way to find full 
sets of disjoint arrays. One possibility is to use a clique search
to find full sets, another is to use the framework of exact cover
for all $n^2$ possibilities for the empty cell in the near Costas 
latin square.

In any case, the number of near Costas latin squares grows very
quickly; already for order 5 this number is 978982. Consequently,
we will not be able to present an extensive table like that in
Table~\ref{tab:c}. Instead, in the Appendix, we give just an example of a near
Costas latin square for each order up to $n=8$.

%\begin{table}[htbp]
%\caption{Computational results for near Costas arrays}
%\label{tab:nc}
%\begin{center}
%\begin{tabular}{crrrrr}\hline
%$n$& Nc   & Nce & $D(n)$& Nl & Nle\\\hline
%1  & 1 	  &     &  1 &    &  \\
%2  & 4 	  &     &  3 & 4  &  \\
%3  & 18   &     &  4 & 72 &\\
%4  & 88   &     &  5 & 5288 &  \\
%5  & 420  &     &  6 & 978982 &\\
%6  & 2112 &     &  7 &    & \\
%7  & 9844 &     &    &    &\\
%8  & 43920&     &    &    & \\\hline
%\end{tabular}
%\end{center}
%\end{table}

\vspace*{-.1in}
\subsection{Vatican Costas Squares}
\vspace*{-.1in}

For some of the structures defined in the introduction, the mappings
in the definitions of equivalence must map rows to rows 
(resp.\ columns to columns), that is, the mapping is in a 
dihedral group of order 4 rather than 8. Vatican Costas squares, which
we consider here, are such structures (so this specification should
be added to the definition on \mbox{\cite[p.\ 149]{E}).}

We consider one representative from each equivalence class of Costas 
latin squares classified in the current work (cf.\ column Nle in
Table~\ref{tab:c}), and check whether it is a Vatican Costas square.
This check is also carried out for the square obtained by a 90 degree 
rotation. For the orders up to 27 for which Costas latin squares 
exist, that is, $n = 2$, 4, 6, 8, 10, 12, 16, 18 and 22, the 
number of equivalence classes of Vatican Costas squares is
1, 1, 3, 0, 2, 2, 4, 3 and 5, respectively. One square from each equivalence class is given in the Appendix. Etzion \cite{E} already
classified such squares of order 6.
It is interesting to note that many of these squares have the property that all of the columns are translates of a single column. Also note that $n=8$ is the only order for which there is a Costas latin square but no Vatican Costas square
(coincidentally, perhaps, it is also the only order
$n$ where there is a Costas latin square but $n+1$ is not a prime).

 %; cf.\ \cite[Conjecture 3]{E}.
%\wnote{Display these squares? Study them to find new constructions?}

\newpage

\section*{Appendix}

\subsection*{A.1 \ A Costas latin square of side 8}

\[
\begin{array}{|c|c|c|c|c|c|c|c|}
 \hline
1  & 5  & 7  & 4  & 2  & 8  & 3  & 6  \\ \hline
2  & 4  & 3  & 6  & 7  & 5  & 1  & 8  \\ \hline
3  & 6  & 1  & 8  & 5  & 4  & 2  & 7  \\ \hline
6  & 3  & 8  & 2  & 4  & 7  & 5  & 1  \\ \hline
8  & 1  & 5  & 7  & 6  & 3  & 4  & 2  \\ \hline
4  & 8  & 2  & 1  & 3  & 6  & 7  & 5  \\ \hline
5  & 7  & 6  & 3  & 1  & 2  & 8  & 4  \\ \hline
7  & 2  & 4  & 5  & 8  & 1  & 6  & 3   \\ \hline
\end{array}
\]

\subsection*{A.2 \  Near Costas latin squares of orders 2 through 8}

\renewcommand {\arraycolsep}{5pt}
%\begin{figure}[htbp]

$$
\begin{array}{|c|c|}
%\multicolumn{2}{c}{}\\
%\multicolumn{2}{c}{}\\
%\multicolumn{2}{c}{}\\
%\multicolumn{2}{c}{}\\
\hline
   & 2 \\\hline
 1 & 3 \\\hline
\end{array}
\hspace{0.2in}
\begin{array}[c]{|c|c|c|}
%\multicolumn{3}{c}{}\\
%\multicolumn{3}{c}{}\\
%\multicolumn{3}{c}{}\\
\hline
   & 2 & 1 \\\hline
 4 & 1 & 3 \\\hline
 3 & 4 & 2 \\\hline
\end{array}
\hspace{0.2in}
\begin{array}[c]{|c|c|c|c|}
%\multicolumn{4}{c}{}\\
%\multicolumn{4}{c}{}\\
\hline
   & 2 & 1 & 3 \\\hline
 4 & 5 & 2 & 1 \\\hline
 3 & 1 & 5 & 4 \\\hline
 5 & 4 & 3 & 2 \\\hline
\end{array}
\hspace{0.2in}
\begin{array}[c]{|c|c|c|c|c|}
%\multicolumn{5}{c}{}\\
\hline
   & 1 & 3 & 5 & 2 \\\hline
 4 & 2 & 6 & 1 & 3 \\\hline
 6 & 5 & 1 & 2 & 4 \\\hline
 5 & 4 & 2 & 3 & 6 \\\hline
 3 & 6 & 5 & 4 & 1 \\\hline
\end{array}
$$

$$
\begin{array}[c]{|c|c|c|c|c|c|}
%\multicolumn{6}{c}{}\\
\hline
   & 1 & 3 & 4 & 7 & 5 \\\hline
 6 & 2 & 7 & 1 & 3 & 4 \\\hline
 5 & 4 & 6 & 2 & 1 & 3 \\\hline
 7 & 5 & 1 & 3 & 2 & 6 \\\hline
 4 & 7 & 2 & 5 & 6 & 1 \\\hline
 3 & 6 & 5 & 7 & 4 & 2 \\\hline
\end{array}
\hspace{0.7in} \begin{array}[c]{|c|c|c|c|c|c|c|}
\hline
   & 1 & 2 & 3 & 4 & 6 & 5 \\\hline
 3 & 2 & 1 & 8 & 5 & 7 & 6 \\\hline
 4 & 5 & 8 & 7 & 1 & 2 & 3 \\\hline
 7 & 4 & 6 & 2 & 3 & 5 & 8 \\\hline
 5 & 6 & 7 & 1 & 8 & 3 & 4 \\\hline
 8 & 7 & 5 & 6 & 2 & 4 & 1 \\\hline
 6 & 8 & 3 & 4 & 7 & 1 & 2 \\\hline
\end{array}
$$

$$
%\\[.7in]
%\multicolumn{4}{l}{
\begin{array}[c]{|c|c|c|c|c|c|c|c|}
\hline
   & 1 & 2 & 4 & 5 & 7 & 9 & 3 \\\hline
 3 & 2 & 1 & 8 & 9 & 6 & 4 & 5 \\\hline
 4 & 9 & 6 & 3 & 8 & 1 & 5 & 7 \\\hline
 9 & 7 & 3 & 1 & 2 & 4 & 8 & 6 \\\hline
 5 & 8 & 7 & 6 & 4 & 9 & 2 & 1 \\\hline
 8 & 4 & 5 & 2 & 7 & 3 & 6 & 9 \\\hline
 7 & 6 & 9 & 5 & 1 & 8 & 3 & 2 \\\hline
 6 & 5 & 8 & 7 & 3 & 2 & 1 & 4 \\\hline
\end{array}
$$
\\

\subsection*{A.3 \ Vatican Costas  squares}

\vspace{-.05in}
\noindent
$n=2$ \[ 
\begin{array}{|c|c|}
\hline
 1 & 2 \\\hline
 2 & 1 \\\hline
\end{array}
\]

\vspace{-.05in}
\noindent
$n=4$
\[
\begin{array}{|c|c|c|c|}
\hline
 1 & 2 & 4 & 3 \\\hline
 2 & 3 & 1 & 4 \\\hline
 3 & 4 & 2 & 1 \\\hline
 4 & 1 & 3 & 2 \\\hline
\end{array}
\]

\vspace{-.05in}
\noindent
$n=6$
\[
\begin{array}{|c|c|c|c|c|c|}
\hline
1&2&3&4&5&6\\\hline
3&6&2&5&1&4\\\hline
5&3&1&6&4&2\\\hline
2&4&6&1&3&5\\\hline
6&5&4&3&2&1\\\hline
4&1&5&2&6&3\\\hline
\end{array}\hspace{.2in}
\begin{array}{|c|c|c|c|c|c|}
\hline
1&3&2&5&6&4\\\hline
2&4&3&6&1&5\\\hline
3&5&4&1&2&6\\\hline
4&6&5&2&3&1\\\hline
5&1&6&3&4&2\\\hline
6&2&1&4&5&3\\\hline
\end{array}\hspace{.2in}
\begin{array}{|c|c|c|c|c|c|}
\hline
1&2&3&4&5&6\\\hline
4&1&5&2&6&3\\\hline
3&6&2&5&1&4\\\hline
2&4&6&1&3&5\\\hline
6&5&4&3&2&1\\\hline
5&3&1&6&4&2\\\hline
\end{array}
\]

\renewcommand{\arraycolsep}{2pt}

\vspace{-.05in}
\noindent
$n=10$
\[
\begin{array}{|c|c|c|c|c|c|c|c|c|c|}
\hline
1&2&9&3&5&10&8&4&7&6\\\hline
2&3&10&4&6&1&9&5&8&7\\\hline
3&4&1&5&7&2&10&6&9&8\\\hline
4&5&2&6&8&3&1&7&10&9\\\hline
5&6&3&7&9&4&2&8&1&10\\\hline
6&7&4&8&10&5&3&9&2&1\\\hline
7&8&5&9&1&6&4&10&3&2\\\hline
8&9&6&10&2&7&5&1&4&3\\\hline
9&10&7&1&3&8&6&2&5&4\\\hline
10&1&8&2&4&9&7&3&6&5\\\hline
\end{array}\hspace{.3in}
\begin{array}{|c|c|c|c|c|c|c|c|c|c|}\hline
1&4&5&7&3&8&2&10&9&6\\\hline
2&5&6&8&4&9&3&1&10&7\\\hline
3&6&7&9&5&10&4&2&1&8\\\hline
4&7&8&10&6&1&5&3&2&9\\\hline
5&8&9&1&7&2&6&4&3&10\\\hline
6&9&10&2&8&3&7&5&4&1\\\hline
7&10&1&3&9&4&8&6&5&2\\\hline
8&1&2&4&10&5&9&7&6&3\\\hline
9&2&3&5&1&6&10&8&7&4\\\hline
10&3&4&6&2&7&1&9&8&5\\\hline
\end{array}\]

\renewcommand{\arraycolsep}{1.2pt}

\small
\noindent
{\normalsize $n=12$}
\[
\begin{array}{|c|c|c|c|c|c|c|c|c|c|c|c|}
\hline
1&2&5&3&10&6&12&4&9&11&8&7\\\hline
2&3&6&4&11&7&1&5&10&12&9&8\\\hline
3&4&7&5&12&8&2&6&11&1&10&9\\\hline
4&5&8&6&1&9&3&7&12&2&11&10\\\hline
5&6&9&7&2&10&4&8&1&3&12&11\\\hline
6&7&10&8&3&11&5&9&2&4&1&12\\\hline
7&8&11&9&4&12&6&10&3&5&2&1\\\hline
8&9&12&10&5&1&7&11&4&6&3&2\\\hline
9&10&1&11&6&2&8&12&5&7&4&3\\\hline
10&11&2&12&7&3&9&1&6&8&5&4\\\hline
11&12&3&1&8&4&10&2&7&9&6&5\\\hline
12&1&4&2&9&5&11&3&8&10&7&6\\\hline
\end{array} \hspace{.3in}
\begin{array}{|c|c|c|c|c|c|c|c|c|c|c|c|}
\hline
1&6&9&11&10&2&8&4&5&3&12&7\\\hline
2&7&10&12&11&3&9&5&6&4&1&8\\\hline
3&8&11&1&12&4&10&6&7&5&2&9\\\hline
4&9&12&2&1&5&11&7&8&6&3&10\\\hline
5&10&1&3&2&6&12&8&9&7&4&11\\\hline
6&11&2&4&3&7&1&9&10&8&5&12\\\hline
7&12&3&5&4&8&2&10&11&9&6&1\\\hline
8&1&4&6&5&9&3&11&12&10&7&2\\\hline
9&2&5&7&6&10&4&12&1&11&8&3\\\hline
10&3&6&8&7&11&5&1&2&12&9&4\\\hline
11&4&7&9&8&12&6&2&3&1&10&5\\\hline
12&5&8&10&9&1&7&3&4&2&11&6\\\hline
\end{array}\]

\scriptsize
\noindent
{\normalsize $n=16$}
\[
\begin{array}{|c|c|c|c|c|c|c|c|c|c|c|c|c|c|c|c|}
\hline
1&3&16&5&12&2&6&7&15&14&10&4&13&8&11&9\\\hline
2&4&1&6&13&3&7&8&16&15&11&5&14&9&12&10\\\hline
3&5&2&7&14&4&8&9&1&16&12&6&15&10&13&11\\\hline
4&6&3&8&15&5&9&10&2&1&13&7&16&11&14&12\\\hline
5&7&4&9&16&6&10&11&3&2&14&8&1&12&15&13\\\hline
6&8&5&10&1&7&11&12&4&3&15&9&2&13&16&14\\\hline
7&9&6&11&2&8&12&13&5&4&16&10&3&14&1&15\\\hline
8&10&7&12&3&9&13&14&6&5&1&11&4&15&2&16\\\hline
9&11&8&13&4&10&14&15&7&6&2&12&5&16&3&1\\\hline
10&12&9&14&5&11&15&16&8&7&3&13&6&1&4&2\\\hline
11&13&10&15&6&12&16&1&9&8&4&14&7&2&5&3\\\hline
12&14&11&16&7&13&1&2&10&9&5&15&8&3&6&4\\\hline
13&15&12&1&8&14&2&3&11&10&6&16&9&4&7&5\\\hline
14&16&13&2&9&15&3&4&12&11&7&1&10&5&8&6\\\hline
15&1&14&3&10&16&4&5&13&12&8&2&11&6&9&7\\\hline
16&2&15&4&11&1&5&6&14&13&9&3&12&7&10&8\\\hline
\end{array}\hspace{.2in}
\begin{array}{|c|c|c|c|c|c|c|c|c|c|c|c|c|c|c|c|}
\hline
1&7&6&13&10&12&8&3&11&16&4&2&5&14&15&9\\\hline
2&8&7&14&11&13&9&4&12&1&5&3&6&15&16&10\\\hline
3&9&8&15&12&14&10&5&13&2&6&4&7&16&1&11\\\hline
4&10&9&16&13&15&11&6&14&3&7&5&8&1&2&12\\\hline
5&11&10&1&14&16&12&7&15&4&8&6&9&2&3&13\\\hline
6&12&11&2&15&1&13&8&16&5&9&7&10&3&4&14\\\hline
7&13&12&3&16&2&14&9&1&6&10&8&11&4&5&15\\\hline
8&14&13&4&1&3&15&10&2&7&11&9&12&5&6&16\\\hline
9&15&14&5&2&4&16&11&3&8&12&10&13&6&7&1\\\hline
10&16&15&6&3&5&1&12&4&9&13&11&14&7&8&2\\\hline
11&1&16&7&4&6&2&13&5&10&14&12&15&8&9&3\\\hline
12&2&1&8&5&7&3&14&6&11&15&13&16&9&10&4\\\hline
13&3&2&9&6&8&4&15&7&12&16&14&1&10&11&5\\\hline
14&4&3&10&7&9&5&16&8&13&1&15&2&11&12&6\\\hline
15&5&4&11&8&10&6&1&9&14&2&16&3&12&13&7\\\hline
16&6&5&12&9&11&7&2&10&15&3&1&4&13&14&8\\\hline
\end{array}\]
\medskip
\[
\begin{array}{|c|c|c|c|c|c|c|c|c|c|c|c|c|c|c|c|}
\hline
1&2&3&4&5&6&7&8&9&10&11&12&13&14&15&16\\\hline
5&10&15&3&8&13&1&6&11&16&4&9&14&2&7&12\\\hline
8&16&7&15&6&14&5&13&4&12&3&11&2&10&1&9\\\hline
6&12&1&7&13&2&8&14&3&9&15&4&10&16&5&11\\\hline
13&9&5&1&14&10&6&2&15&11&7&3&16&12&8&4\\\hline
14&11&8&5&2&16&13&10&7&4&1&15&12&9&6&3\\\hline
2&4&6&8&10&12&14&16&1&3&5&7&9&11&13&15\\\hline
10&3&13&6&16&9&2&12&5&15&8&1&11&4&14&7\\\hline
16&15&14&13&12&11&10&9&8&7&6&5&4&3&2&1\\\hline
12&7&2&14&9&4&16&11&6&1&13&8&3&15&10&5\\\hline
9&1&10&2&11&3&12&4&13&5&14&6&15&7&16&8\\\hline
11&5&16&10&4&15&9&3&14&8&2&13&7&1&12&6\\\hline
4&8&12&16&3&7&11&15&2&6&10&14&1&5&9&13\\\hline
3&6&9&12&15&1&4&7&10&13&16&2&5&8&11&14\\\hline
15&13&11&9&7&5&3&1&16&14&12&10&8&6&4&2\\\hline
7&14&4&11&1&8&15&5&12&2&9&16&6&13&3&10\\\hline
\end{array}\hspace{.2in}
\begin{array}{|c|c|c|c|c|c|c|c|c|c|c|c|c|c|c|c|}
\hline
1&3&8&5&4&10&14&7&15&6&2&12&13&16&11&9\\\hline
2&4&9&6&5&11&15&8&16&7&3&13&14&1&12&10\\\hline
3&5&10&7&6&12&16&9&1&8&4&14&15&2&13&11\\\hline
4&6&11&8&7&13&1&10&2&9&5&15&16&3&14&12\\\hline
5&7&12&9&8&14&2&11&3&10&6&16&1&4&15&13\\\hline
6&8&13&10&9&15&3&12&4&11&7&1&2&5&16&14\\\hline
7&9&14&11&10&16&4&13&5&12&8&2&3&6&1&15\\\hline
8&10&15&12&11&1&5&14&6&13&9&3&4&7&2&16\\\hline
9&11&16&13&12&2&6&15&7&14&10&4&5&8&3&1\\\hline
10&12&1&14&13&3&7&16&8&15&11&5&6&9&4&2\\\hline
11&13&2&15&14&4&8&1&9&16&12&6&7&10&5&3\\\hline
12&14&3&16&15&5&9&2&10&1&13&7&8&11&6&4\\\hline
13&15&4&1&16&6&10&3&11&2&14&8&9&12&7&5\\\hline
14&16&5&2&1&7&11&4&12&3&15&9&10&13&8&6\\\hline
15&1&6&3&2&8&12&5&13&4&16&10&11&14&9&7\\\hline
16&2&7&4&3&9&13&6&14&5&1&11&12&15&10&8\\\hline
\end{array}\]

\bigskip
\renewcommand{\arraycolsep}{.8pt}
\noindent
{\normalsize $n=18$}
\[
\begin{array}{|c|c|c|c|c|c|c|c|c|c|c|c|c|c|c|c|c|c|}
\hline
1&2&14&3&17&15&7&4&9&18&13&16&6&8&12&5&11&10\\\hline
2&3&15&4&18&16&8&5&10&1&14&17&7&9&13&6&12&11\\\hline
3&4&16&5&1&17&9&6&11&2&15&18&8&10&14&7&13&12\\\hline
4&5&17&6&2&18&10&7&12&3&16&1&9&11&15&8&14&13\\\hline
5&6&18&7&3&1&11&8&13&4&17&2&10&12&16&9&15&14\\\hline
6&7&1&8&4&2&12&9&14&5&18&3&11&13&17&10&16&15\\\hline
7&8&2&9&5&3&13&10&15&6&1&4&12&14&18&11&17&16\\\hline
8&9&3&10&6&4&14&11&16&7&2&5&13&15&1&12&18&17\\\hline
9&10&4&11&7&5&15&12&17&8&3&6&14&16&2&13&1&18\\\hline
10&11&5&12&8&6&16&13&18&9&4&7&15&17&3&14&2&1\\\hline
11&12&6&13&9&7&17&14&1&10&5&8&16&18&4&15&3&2\\\hline
12&13&7&14&10&8&18&15&2&11&6&9&17&1&5&16&4&3\\\hline
13&14&8&15&11&9&1&16&3&12&7&10&18&2&6&17&5&4\\\hline
14&15&9&16&12&10&2&17&4&13&8&11&1&3&7&18&6&5\\\hline
15&16&10&17&13&11&3&18&5&14&9&12&2&4&8&1&7&6\\\hline
16&17&11&18&14&12&4&1&6&15&10&13&3&5&9&2&8&7\\\hline
17&18&12&1&15&13&5&2&7&16&11&14&4&6&10&3&9&8\\\hline
18&1&13&2&16&14&6&3&8&17&12&15&5&7&11&4&10&9\\\hline
\end{array}\hspace{.1in}
\begin{array}{|c|c|c|c|c|c|c|c|c|c|c|c|c|c|c|c|c|c|}
\hline
1&2&3&4&5&6&7&8&9&10&11&12&13&14&15&16&17&18\\\hline
3&6&9&12&15&18&2&5&8&11&14&17&1&4&7&10&13&16\\\hline
9&18&8&17&7&16&6&15&5&14&4&13&3&12&2&11&1&10\\\hline
8&16&5&13&2&10&18&7&15&4&12&1&9&17&6&14&3&11\\\hline
5&10&15&1&6&11&16&2&7&12&17&3&8&13&18&4&9&14\\\hline
15&11&7&3&18&14&10&6&2&17&13&9&5&1&16&12&8&4\\\hline
7&14&2&9&16&4&11&18&6&13&1&8&15&3&10&17&5&12\\\hline
2&4&6&8&10&12&14&16&18&1&3&5&7&9&11&13&15&17\\\hline
6&12&18&5&11&17&4&10&16&3&9&15&2&8&14&1&7&13\\\hline
18&17&16&15&14&13&12&11&10&9&8&7&6&5&4&3&2&1\\\hline
16&13&10&7&4&1&17&14&11&8&5&2&18&15&12&9&6&3\\\hline
10&1&11&2&12&3&13&4&14&5&15&6&16&7&17&8&18&9\\\hline
11&3&14&6&17&9&1&12&4&15&7&18&10&2&13&5&16&8\\\hline
14&9&4&18&13&8&3&17&12&7&2&16&11&6&1&15&10&5\\\hline
4&8&12&16&1&5&9&13&17&2&6&10&14&18&3&7&11&15\\\hline
12&5&17&10&3&15&8&1&13&6&18&11&4&16&9&2&14&7\\\hline
17&15&13&11&9&7&5&3&1&18&16&14&12&10&8&6&4&2\\\hline
13&7&1&14&8&2&15&9&3&16&10&4&17&11&5&18&12&6\\\hline
\end{array}\]

\footnotesize
\renewcommand{\arraycolsep}{1.5pt}
\[
\begin{array}{|c|c|c|c|c|c|c|c|c|c|c|c|c|c|c|c|c|c|}
\hline
1&6&12&11&9&17&13&16&5&14&7&4&8&18&2&3&15&10\\\hline
2&7&13&12&10&18&14&17&6&15&8&5&9&1&3&4&16&11\\\hline
3&8&14&13&11&1&15&18&7&16&9&6&10&2&4&5&17&12\\\hline
4&9&15&14&12&2&16&1&8&17&10&7&11&3&5&6&18&13\\\hline
5&10&16&15&13&3&17&2&9&18&11&8&12&4&6&7&1&14\\\hline
6&11&17&16&14&4&18&3&10&1&12&9&13&5&7&8&2&15\\\hline
7&12&18&17&15&5&1&4&11&2&13&10&14&6&8&9&3&16\\\hline
8&13&1&18&16&6&2&5&12&3&14&11&15&7&9&10&4&17\\\hline
9&14&2&1&17&7&3&6&13&4&15&12&16&8&10&11&5&18\\\hline
10&15&3&2&18&8&4&7&14&5&16&13&17&9&11&12&6&1\\\hline
11&16&4&3&1&9&5&8&15&6&17&14&18&10&12&13&7&2\\\hline
12&17&5&4&2&10&6&9&16&7&18&15&1&11&13&14&8&3\\\hline
13&18&6&5&3&11&7&10&17&8&1&16&2&12&14&15&9&4\\\hline
14&1&7&6&4&12&8&11&18&9&2&17&3&13&15&16&10&5\\\hline
15&2&8&7&5&13&9&12&1&10&3&18&4&14&16&17&11&6\\\hline
16&3&9&8&6&14&10&13&2&11&4&1&5&15&17&18&12&7\\\hline
17&4&10&9&7&15&11&14&3&12&5&2&6&16&18&1&13&8\\\hline
18&5&11&10&8&16&12&15&4&13&6&3&7&17&1&2&14&9\\\hline
\end{array}\]

\bigskip \bigskip
\renewcommand{\arraycolsep}{2pt}
\noindent
{\normalsize $n=22$}

\footnotesize
\[
\begin{array}{|c|c|c|c|c|c|c|c|c|c|c|c|c|c|c|c|c|c|c|c|c|c|}
\hline
1&2&3&4&5&6&7&8&9&10&11&12&13&14&15&16&17&18&19&20&21&22\\\hline
7&14&21&5&12&19&3&10&17&1&8&15&22&6&13&20&4&11&18&2&9&16\\\hline
3&6&9&12&15&18&21&1&4&7&10&13&16&19&22&2&5&8&11&14&17&20\\\hline
21&19&17&15&13&11&9&7&5&3&1&22&20&18&16&14&12&10&8&6&4&2\\\hline
9&18&4&13&22&8&17&3&12&21&7&16&2&11&20&6&15&1&10&19&5&14\\\hline
17&11&5&22&16&10&4&21&15&9&3&20&14&8&2&19&13&7&1&18&12&6\\\hline
4&8&12&16&20&1&5&9&13&17&21&2&6&10&14&18&22&3&7&11&15&19\\\hline
5&10&15&20&2&7&12&17&22&4&9&14&19&1&6&11&16&21&3&8&13&18\\\hline
12&1&13&2&14&3&15&4&16&5&17&6&18&7&19&8&20&9&21&10&22&11\\\hline
15&7&22&14&6&21&13&5&20&12&4&19&11&3&18&10&2&17&9&1&16&8\\\hline
13&3&16&6&19&9&22&12&2&15&5&18&8&21&11&1&14&4&17&7&20&10\\\hline
22&21&20&19&18&17&16&15&14&13&12&11&10&9&8&7&6&5&4&3&2&1\\\hline
16&9&2&18&11&4&20&13&6&22&15&8&1&17&10&3&19&12&5&21&14&7\\\hline
20&17&14&11&8&5&2&22&19&16&13&10&7&4&1&21&18&15&12&9&6&3\\\hline
2&4&6&8&10&12&14&16&18&20&22&1&3&5&7&9&11&13&15&17&19&21\\\hline
14&5&19&10&1&15&6&20&11&2&16&7&21&12&3&17&8&22&13&4&18&9\\\hline
6&12&18&1&7&13&19&2&8&14&20&3&9&15&21&4&10&16&22&5&11&17\\\hline
19&15&11&7&3&22&18&14&10&6&2&21&17&13&9&5&1&20&16&12&8&4\\\hline
18&13&8&3&21&16&11&6&1&19&14&9&4&22&17&12&7&2&20&15&10&5\\\hline
11&22&10&21&9&20&8&19&7&18&6&17&5&16&4&15&3&14&2&13&1&12\\\hline
8&16&1&9&17&2&10&18&3&11&19&4&12&20&5&13&21&6&14&22&7&15\\\hline
10&20&7&17&4&14&1&11&21&8&18&5&15&2&12&22&9&19&6&16&3&13\\\hline
\end{array} \]

\[\begin{array}{|c|c|c|c|c|c|c|c|c|c|c|c|c|c|c|c|c|c|c|c|c|c|}\hline
1&3&17&5&2&19&20&7&11&4&10&21&15&22&18&9&8&13&16&6&14&12\\\hline
2&4&18&6&3&20&21&8&12&5&11&22&16&1&19&10&9&14&17&7&15&13\\\hline
3&5&19&7&4&21&22&9&13&6&12&1&17&2&20&11&10&15&18&8&16&14\\\hline
4&6&20&8&5&22&1&10&14&7&13&2&18&3&21&12&11&16&19&9&17&15\\\hline
5&7&21&9&6&1&2&11&15&8&14&3&19&4&22&13&12&17&20&10&18&16\\\hline
6&8&22&10&7&2&3&12&16&9&15&4&20&5&1&14&13&18&21&11&19&17\\\hline
7&9&1&11&8&3&4&13&17&10&16&5&21&6&2&15&14&19&22&12&20&18\\\hline
8&10&2&12&9&4&5&14&18&11&17&6&22&7&3&16&15&20&1&13&21&19\\\hline
9&11&3&13&10&5&6&15&19&12&18&7&1&8&4&17&16&21&2&14&22&20\\\hline
10&12&4&14&11&6&7&16&20&13&19&8&2&9&5&18&17&22&3&15&1&21\\\hline
11&13&5&15&12&7&8&17&21&14&20&9&3&10&6&19&18&1&4&16&2&22\\\hline
12&14&6&16&13&8&9&18&22&15&21&10&4&11&7&20&19&2&5&17&3&1\\\hline
13&15&7&17&14&9&10&19&1&16&22&11&5&12&8&21&20&3&6&18&4&2\\\hline
14&16&8&18&15&10&11&20&2&17&1&12&6&13&9&22&21&4&7&19&5&3\\\hline
15&17&9&19&16&11&12&21&3&18&2&13&7&14&10&1&22&5&8&20&6&4\\\hline
16&18&10&20&17&12&13&22&4&19&3&14&8&15&11&2&1&6&9&21&7&5\\\hline
17&19&11&21&18&13&14&1&5&20&4&15&9&16&12&3&2&7&10&22&8&6\\\hline
18&20&12&22&19&14&15&2&6&21&5&16&10&17&13&4&3&8&11&1&9&7\\\hline
19&21&13&1&20&15&16&3&7&22&6&17&11&18&14&5&4&9&12&2&10&8\\\hline
20&22&14&2&21&16&17&4&8&1&7&18&12&19&15&6&5&10&13&3&11&9\\\hline
21&1&15&3&22&17&18&5&9&2&8&19&13&20&16&7&6&11&14&4&12&10\\\hline
22&2&16&4&1&18&19&6&10&3&9&20&14&21&17&8&7&12&15&5&13&11\\\hline
\end{array} \]
\bigskip

\[\begin{array}{|c|c|c|c|c|c|c|c|c|c|c|c|c|c|c|c|c|c|c|c|c|c|}\hline
1&7&5&13&4&11&14&19&9&10&6&17&21&20&8&3&22&15&2&16&18&12\\\hline
2&8&6&14&5&12&15&20&10&11&7&18&22&21&9&4&1&16&3&17&19&13\\\hline
3&9&7&15&6&13&16&21&11&12&8&19&1&22&10&5&2&17&4&18&20&14\\\hline
4&10&8&16&7&14&17&22&12&13&9&20&2&1&11&6&3&18&5&19&21&15\\\hline
5&11&9&17&8&15&18&1&13&14&10&21&3&2&12&7&4&19&6&20&22&16\\\hline
6&12&10&18&9&16&19&2&14&15&11&22&4&3&13&8&5&20&7&21&1&17\\\hline
7&13&11&19&10&17&20&3&15&16&12&1&5&4&14&9&6&21&8&22&2&18\\\hline
8&14&12&20&11&18&21&4&16&17&13&2&6&5&15&10&7&22&9&1&3&19\\\hline
9&15&13&21&12&19&22&5&17&18&14&3&7&6&16&11&8&1&10&2&4&20\\\hline
10&16&14&22&13&20&1&6&18&19&15&4&8&7&17&12&9&2&11&3&5&21\\\hline
11&17&15&1&14&21&2&7&19&20&16&5&9&8&18&13&10&3&12&4&6&22\\\hline
12&18&16&2&15&22&3&8&20&21&17&6&10&9&19&14&11&4&13&5&7&1\\\hline
13&19&17&3&16&1&4&9&21&22&18&7&11&10&20&15&12&5&14&6&8&2\\\hline
14&20&18&4&17&2&5&10&22&1&19&8&12&11&21&16&13&6&15&7&9&3\\\hline
15&21&19&5&18&3&6&11&1&2&20&9&13&12&22&17&14&7&16&8&10&4\\\hline
16&22&20&6&19&4&7&12&2&3&21&10&14&13&1&18&15&8&17&9&11&5\\\hline
17&1&21&7&20&5&8&13&3&4&22&11&15&14&2&19&16&9&18&10&12&6\\\hline
18&2&22&8&21&6&9&14&4&5&1&12&16&15&3&20&17&10&19&11&13&7\\\hline
19&3&1&9&22&7&10&15&5&6&2&13&17&16&4&21&18&11&20&12&14&8\\\hline
20&4&2&10&1&8&11&16&6&7&3&14&18&17&5&22&19&12&21&13&15&9\\\hline
21&5&3&11&2&9&12&17&7&8&4&15&19&18&6&1&20&13&22&14&16&10\\\hline
22&6&4&12&3&10&13&18&8&9&5&16&20&19&7&2&21&14&1&15&17&11\\\hline
\end{array} \]

\[\begin{array}{|c|c|c|c|c|c|c|c|c|c|c|c|c|c|c|c|c|c|c|c|c|c|}\hline
1&5&11&9&14&15&6&13&21&18&8&19&7&10&2&17&4&3&20&22&16&12\\\hline
2&6&12&10&15&16&7&14&22&19&9&20&8&11&3&18&5&4&21&1&17&13\\\hline
3&7&13&11&16&17&8&15&1&20&10&21&9&12&4&19&6&5&22&2&18&14\\\hline
4&8&14&12&17&18&9&16&2&21&11&22&10&13&5&20&7&6&1&3&19&15\\\hline
5&9&15&13&18&19&10&17&3&22&12&1&11&14&6&21&8&7&2&4&20&16\\\hline
6&10&16&14&19&20&11&18&4&1&13&2&12&15&7&22&9&8&3&5&21&17\\\hline
7&11&17&15&20&21&12&19&5&2&14&3&13&16&8&1&10&9&4&6&22&18\\\hline
8&12&18&16&21&22&13&20&6&3&15&4&14&17&9&2&11&10&5&7&1&19\\\hline
9&13&19&17&22&1&14&21&7&4&16&5&15&18&10&3&12&11&6&8&2&20\\\hline
10&14&20&18&1&2&15&22&8&5&17&6&16&19&11&4&13&12&7&9&3&21\\\hline
11&15&21&19&2&3&16&1&9&6&18&7&17&20&12&5&14&13&8&10&4&22\\\hline
12&16&22&20&3&4&17&2&10&7&19&8&18&21&13&6&15&14&9&11&5&1\\\hline
13&17&1&21&4&5&18&3&11&8&20&9&19&22&14&7&16&15&10&12&6&2\\\hline
14&18&2&22&5&6&19&4&12&9&21&10&20&1&15&8&17&16&11&13&7&3\\\hline
15&19&3&1&6&7&20&5&13&10&22&11&21&2&16&9&18&17&12&14&8&4\\\hline
16&20&4&2&7&8&21&6&14&11&1&12&22&3&17&10&19&18&13&15&9&5\\\hline
17&21&5&3&8&9&22&7&15&12&2&13&1&4&18&11&20&19&14&16&10&6\\\hline
18&22&6&4&9&10&1&8&16&13&3&14&2&5&19&12&21&20&15&17&11&7\\\hline
19&1&7&5&10&11&2&9&17&14&4&15&3&6&20&13&22&21&16&18&12&8\\\hline
20&2&8&6&11&12&3&10&18&15&5&16&4&7&21&14&1&22&17&19&13&9\\\hline
21&3&9&7&12&13&4&11&19&16&6&17&5&8&22&15&2&1&18&20&14&10\\\hline
22&4&10&8&13&14&5&12&20&17&7&18&6&9&1&16&3&2&19&21&15&11\\\hline
\end{array} \]

\bigskip
\[\begin{array}{|c|c|c|c|c|c|c|c|c|c|c|c|c|c|c|c|c|c|c|c|c|c|}\hline
1&11&15&21&6&3&8&9&7&16&2&13&5&18&20&19&14&17&10&4&22&12\\\hline
2&12&16&22&7&4&9&10&8&17&3&14&6&19&21&20&15&18&11&5&1&13\\\hline
3&13&17&1&8&5&10&11&9&18&4&15&7&20&22&21&16&19&12&6&2&14\\\hline
4&14&18&2&9&6&11&12&10&19&5&16&8&21&1&22&17&20&13&7&3&15\\\hline
5&15&19&3&10&7&12&13&11&20&6&17&9&22&2&1&18&21&14&8&4&16\\\hline
6&16&20&4&11&8&13&14&12&21&7&18&10&1&3&2&19&22&15&9&5&17\\\hline
7&17&21&5&12&9&14&15&13&22&8&19&11&2&4&3&20&1&16&10&6&18\\\hline
8&18&22&6&13&10&15&16&14&1&9&20&12&3&5&4&21&2&17&11&7&19\\\hline
9&19&1&7&14&11&16&17&15&2&10&21&13&4&6&5&22&3&18&12&8&20\\\hline
10&20&2&8&15&12&17&18&16&3&11&22&14&5&7&6&1&4&19&13&9&21\\\hline
11&21&3&9&16&13&18&19&17&4&12&1&15&6&8&7&2&5&20&14&10&22\\\hline
12&22&4&10&17&14&19&20&18&5&13&2&16&7&9&8&3&6&21&15&11&1\\\hline
13&1&5&11&18&15&20&21&19&6&14&3&17&8&10&9&4&7&22&16&12&2\\\hline
14&2&6&12&19&16&21&22&20&7&15&4&18&9&11&10&5&8&1&17&13&3\\\hline
15&3&7&13&20&17&22&1&21&8&16&5&19&10&12&11&6&9&2&18&14&4\\\hline
16&4&8&14&21&18&1&2&22&9&17&6&20&11&13&12&7&10&3&19&15&5\\\hline
17&5&9&15&22&19&2&3&1&10&18&7&21&12&14&13&8&11&4&20&16&6\\\hline
18&6&10&16&1&20&3&4&2&11&19&8&22&13&15&14&9&12&5&21&17&7\\\hline
19&7&11&17&2&21&4&5&3&12&20&9&1&14&16&15&10&13&6&22&18&8\\\hline
20&8&12&18&3&22&5&6&4&13&21&10&2&15&17&16&11&14&7&1&19&9\\\hline
21&9&13&19&4&1&6&7&5&14&22&11&3&16&18&17&12&15&8&2&20&10\\\hline
22&10&14&20&5&2&7&8&6&15&1&12&4&17&19&18&13&16&9&3&21&11\\\hline
\end{array} \]

\end{document}